%%%% -*- Mode: LaTeX -*-
%!TEX encoding = UTF-8 Unicode

\documentclass[10pt]{amsart}

\usepackage[T1]{fontenc}
\usepackage[utf8]{inputenc}
\usepackage{graphicx}
\usepackage{amssymb}
\usepackage{amsthm}
\usepackage{hyperref}
\usepackage{enumerate}

\theoremstyle{plain} %% This is the default, anyway
\newtheorem{theorem}{Theorem}
\newtheorem{lemma}{Lemma}
\newtheorem{corollary}{Corollary}
\newtheorem{ex}{Example}

\begin{document}

\title[B\"{o}hm 1968]{Some properties of $\beta$-$\eta$-normal forms
  in $\lambda$-K-calculus \\
  (Alcune propriet{\`a} delle forme $\beta$-$\eta$-normali nel $\lambda$-K-calcolo)}

\author{Corrado B{\"o}hm}

\begin{abstract}
  This is a plain English translation of~\cite{Boehm:1968},
  originally in Italian, by Chun Tian. All footnotes (and
  citations only found in footnotes, of course) are added by the translator.
\end{abstract}

\newenvironment{nouppercase}{%
  \let\uppercase\relax%
  \renewcommand{\uppercasenonmath}[1]{}}{}
\begin{nouppercase}
\maketitle
\end{nouppercase}

\section{Introduction}

% 1
We assume the reader is familiar with the theory of $\lambda$-k-calculus
described in~\cite{Church:1951}, \cite{Rosenbloom:1950}, \cite{Curry:1958}, or
at least some work of the author~\cite{Boehm:1966a},~\cite{Boehm:1966b}.

% 2
In the following we will adopt ``combinator'' as a synonym of formulas
of the $\lambda$-k-calculus without free variables.

% 3
And note that combinators form a set $\mathcal{C}$ of entities,
which can also be interpreted as functions from $\mathcal{C}$ to
$\mathcal{C}$, or from $\mathcal{C}\times\mathcal{C}$ to
$\mathcal{C}$, or from $\mathcal{C}\times\mathcal{C}\times\mathcal{C}$
to $\mathcal{C}$, etc.

% 4
A subclass of combinators is constructed from the so-called
\emph{normal forms}, whose algebraic characteristic is arduous
(difficult to obtain).

% 5
And note that the set $\mathcal{C}$ of combinators is generated from
two normal forms distinguished among them
\begin{align*}
  \mathrm{S} &\equiv \lambda x\lambda y\lambda z\,( x z\,( y z )\,) \\
  \mathrm{K} &\equiv \lambda x\lambda y\,x
\end{align*}
with repeatedly ``applications''.
% 6
Now a significance which can be attributed to the Corollary \ref{cor:1} of this
work is the following:
% 7
there exist no privileged normal forms (which could be considered as
$\mathrm{S}$ and $\mathrm{K}$) that, if one establish the congruence
classes of combinators (with respect to applications) such that
at least two non-congruent combinators exist in them, this is
enough to state that each class cannot contain more than one normal
form.

% 8
A second significance renders the Corollary \ref{cor:1} rather peculiar
(unusual). In fact, considering normal forms as functions of a
finite number of variables, one paraphrase (explanation) of the result
is: given two distinct functions it is always possible to find a
$n$-tuple of arguments such that the values of the two functions for
that $n$-tuple of arguments coincide with two pre-given arbitrary
values.

% 9
The Corollary \ref{cor:1} resolves, among others, a problem posted
in~\cite{Boehm:1966b} (pp.~194): Finding an algorithm for constructing, given
two distinct normal forms $F$ and $G$, a combinator $\Delta$ such that
\begin{equation*}
  \Delta F = \mathrm{K}, \qquad \Delta G = \mathrm{0}.
  \qquad(\mathrm{0} \equiv \lambda x\lambda y\, y)
\end{equation*}

% 10
This algorithm is implicit in the proof of Theorem \ref{thm:1} of the present work.

% 11
It is easy to show that the combinators form a semigroup with left zeros,
and an identity with respect to all composition ($\circ$) operations with the
generators $C_*$, $C_*\mathrm{S}$, $C_*\mathrm{K}$, where
\begin{equation*}
  C_* \equiv \lambda x\lambda y\,(y\,x)
\end{equation*}

% 12
The Corollary 2 affirms that, for any normal form $F$, which is not
zero of the semigroup (thus is different from $\lambda y\,X$, where
$y$ does not occur in $X$), the following property
\begin{equation*}
  \Delta\circ F\circ\nabla = \mathrm{I}\qquad\qquad
  (\mathrm{I}\equiv\lambda x\,x,\;\text{identity combinator})
\end{equation*}
holds for appropriate combinators $\Delta$ and $\nabla$ of which the
structure will be given.

% 13
Lastly the Corollary 3 concerns the cardinality of the codomain of the
combinators considered as unary functions. For zeros of the
semigroup, i.e.~the combinators $\lambda y\,X$, their cardinality is one.
% 14
The Corollary 3 proves that if two distinct normal forms fall in the
codomain, the cardinality must be at least three.

% 15
This corollary is susceptible (easily allowed) for improvements. In a next work we
intende to replace with the following: if $n$ distinct normal forms
fall in the codomain of a combinator, the cardinality of the codomain
is at least $n+1$.

% 16
We propose the following conjecture: the cardinality of the codomain
of any combinator is either 1 or $\infty$.

\section{Syntactic Notations}

% 17
As we know the set $\mathcal{N}$ of the $\beta$-$\eta$-normal forms is
recursively defined as follows:
\begin{align*}
  X_1,\ldots X_m \in \mathcal{N} &\rightarrow \xi\,X_1\cdots X_m \in
                                   \mathcal{N},\qquad m\geqslant
                                   0, && \text{$\xi$ variable} \\
  X \in \mathcal{N} &\rightarrow \lambda t\,X \in
                      \mathcal{N}, && \text{$t$ variable}
\end{align*}
(except the case $X\equiv X'\, t$ where $t$ does not
occur\footnote{This is to prevent $\lambda t\,X'\,t \in\mathcal{N}$,
  which can be $\eta$-reduced to $X'$.} in $X'$.

% 18
From here one can deduce that every formula $N \in\mathcal{N}$ has the
following structure
\begin{equation*}
  N \equiv \lambda t_1\,\ldots \lambda t_n\,(\xi\,X_1\cdots X_m)
\end{equation*}
where $n\geqslant 0$ is the \emph{order} of $N$, $m\geqslant 0$ is the
\emph{grade} of $N$, $X_k\in\mathcal{N}$, and $\xi$ is \emph{principle
variable} of $N$.

% 19
Note that the symbol $\equiv$ represents the identity of formulas or
definitional identity, while the symbol $=$ is going for the
$\lambda$-k-convertibility of formulas.
% 20
As is usual if $M$ is a formula and if $y_1,y_2,\ldots$ are variables,
the writing $M[y_1,y_2,\ldots]$ indicates that in the formula $M$ the
variables $y_1,y_2$ can occur freely. The writing $M[G_1,G_2,\ldots]$
indicates the result of the substitution in $M$ of every free
occurrence of $y_1$ with $G_1$, $y_2$ with $G_2$, etc.

% 21
Now we can enunciate (state) the following theorem:

\begin{theorem}
  \label{thm:1}
  % 22
  Let $N_1,N_2\in\mathcal{N}$. If $N_1\neq N_2$ then there exist
  \begin{enumerate}
  \item Two integers $t,s \geqslant 0$
  \item Combinators $\nabla_k\in\mathcal{N}, k = 1,\ldots t$
  \item Formulas $H_k\in\mathcal{N}, k = 1,\ldots s$
  \end{enumerate}
  such that
  \begin{equation*}
    N_i[\nabla_1\,y_1,\ldots \nabla_t\,y_t]\,H_1\ldots H_s = v_i\qquad
    (i = 1,2)
  \end{equation*}
  where the variables $v_i$ occur free in some $H_k$.
\end{theorem}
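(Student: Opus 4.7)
The plan is to prove the theorem by induction on the combined structural complexity of the pair $(N_1,N_2)$, using the construction now known as the ``B\"{o}hm-out''. The substitutions $\nabla_k\,y_k$ will replace each free variable $y_k$ occurring in $N_i$ by the term $\nabla_k\,y_k$, where $\nabla_k$ is a closed selector-style combinator of the form $\lambda x_1\cdots x_r.\,x_j$ (possibly composed with further such selectors); the wrapping preserves $y_k$ so it remains available to appear in the final reduct. The list $H_1,\ldots,H_s$ will serve a double duty: consuming the outer $\lambda$-binders of the $N_i$ by supplying arguments, and delivering, in its tail, the two distinct free witness variables $v_1,v_2$ that the theorem requires.

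For the setup, write $N_i\equiv\lambda t_1^i\cdots t_{n_i}^i.\,\xi_i X_1^i\cdots X_{m_i}^i$ and first reduce to the case $n_1=n_2=n$ by prepending $n=\max(n_1,n_2)$ fresh variables at the head of $H_1,\ldots,H_s$; this is an $\eta$-long-style normalization, justified by the clause in the definition of $\mathcal{N}$ that rules out trivial $\eta$-redexes. After performing the substitutions and consuming the outer $\lambda$'s, each $N_i$ reduces to an application $h_i\,Y_1^i\cdots Y_{r_i}^i$ whose head $h_i$ is either the image under the substitution of the original principal variable $\xi_i$, or one of the fresh variables just introduced. Now I split into cases. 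If $h_1\not\equiv h_2$, choose the corresponding $\nabla$'s so that the two heads act as selectors of the appropriate arities projecting onto two distinct tail positions of the $H$-sequence, and place $v_1$ and $v_2$ at those positions. If $h_1\equiv h_2$ but the grades $r_1\neq r_2$, substitute for the common head a selector that discriminates by arity: after consuming $\min(r_1,r_2)$ arguments, the extra arguments on the higher-grade side can be routed to a tail position holding a distinct $v$.

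The inductive step is the crux. Assume heads and grades agree; then $X_{k_0}^1\not\equiv X_{k_0}^2$ for some index $k_0$, and by the inductive hypothesis there is a separating tuple $(\nabla',H')$ for this strictly smaller pair (possibly with additional free variables, owing to the outer fresh variables now appearing in the sub-terms). The idea is then to substitute for the common head a selector of the shape $\lambda x_1\cdots x_r.\,x_{k_0}$, which extracts $X_{k_0}^i$, and to arrange that the tail of the $H$-list coincides with $H'$, so that after $\beta$-reduction the whole expression reduces to the inductive witness $v_i'$. The delicate point, and the main obstacle, is that a single choice of $\nabla$'s must serve both $N_1$ and $N_2$ simultaneously, must expose $X_{k_0}^i$ as the new active subterm, and must transport the induction data $H'$ into the correct position without disturbing either the substitutions already committed for the outer binders or the already-used prefix of the $H$-list. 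The book-keeping needed to pick the arities of the selectors consistently, and to verify that the $\beta$-reduction actually terminates at the chosen $v_i$, is precisely where B\"{o}hm's argument does its real work, and where the algorithm promised in the introduction is implicitly extracted.
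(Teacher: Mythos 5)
There is a genuine gap, and it sits exactly where you place it yourself: the ``delicate point'' you defer in your last paragraph is not book-keeping but the actual content of the theorem. Your inductive step substitutes for the common head variable a plain selector $\lambda x_1\cdots x_r.\,x_{k_0}$ extracting the differing argument $X_{k_0}^i$. But that substitution hits \emph{every} occurrence of that variable, including any occurrence heading $X_{k_0}^i$ itself or heading a deeper differing subterm, and those occurrences may require a selector of a different index or arity. The paper's Example~\ref{ex:2} ($N_1\equiv y_1y_2\,(y_1y_2y_3)$ versus $N_2\equiv y_1y_2\,(y_1y_3y_3)$) is precisely a case where the two demands on the single variable $y_1$ are incompatible, so the induction as you describe it does not go through; composing further projections does not help, since the composite is still forced to act identically at both occurrences.

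The missing idea is the permutator $\nabla_h\equiv\lambda u\lambda t_1\ldots\lambda t_{h+1}\,(t_{h+1}\,u\,t_1\ldots t_h)$ of Lemma~\ref{lem:3}: replacing each offending variable $z$ by $\nabla_h z$, with $h$ at least the maximum grade occurring along the whole chain of differing subterm pairs, rotates each application $z\,Y_1\cdots Y_g$ into $\lambda t_{g+1}\ldots\lambda t_{h+1}\,(t_{h+1}\,z\,Y_1\cdots Y_g\,t_{g+1}\ldots t_h)$, so that after feeding fresh arguments a \emph{fresh} variable becomes the principal variable at each level; distinct levels then get distinct heads and can each receive their own selector independently. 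Correspondingly, the paper does not run a local induction on subterm pairs: it first builds the whole finite chain $(M_1^{(l)},M_2^{(l)})$, $l=0,\ldots,\sigma$, of differing subterms, proves separation under the hypothesis that all principal variables along the chain are distinct (Lemma~\ref{lem:2}, one selector $\mathrm{K}^{j_l}\mathrm{K}^{g_l-j_l}$ per level), and then applies $\nabla_h$ once, globally, to force that distinctness. Your outline does cover the base cases (the paper's Lemma~\ref{lem:1}, where heads or order-minus-grade disagree), but without the $\nabla_h$ device the inductive step fails on concrete inputs.
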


% 23
A very significant special case is the following:
\begin{corollary}
  \label{cor:1}
  % 24
  Let $C_1,C_2\in\mathcal{N}$ be two combinators in normal form. If
  $C_1\neq C_2$ then there exist
  \begin{enumerate}
  \item An integer $s > 0$,
  \item Combinators $G_1,\ldots G_s$ having the structure
    $\mathrm{K}^a\mathrm{K}^b$ or $\mathrm{K}^aX_1$,
    $\mathrm{K}^bX_2$, where $a,b \geqslant 0$ are integers.
  \end{enumerate}
  such that
  \begin{equation*}
    C_i\,G_1\ldots G_s = X_i\qquad(i = 1,2)
  \end{equation*}
  where $X_1,X_2$ are two arbitrary combinators.
\end{corollary}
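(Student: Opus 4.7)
\emph{Setup.} The plan is to obtain the corollary as a tight specialization of Theorem~\ref{thm:1}: apply the theorem to the combinators $C_1, C_2$, and then turn the free separating variables $v_1, v_2$ into the desired arbitrary combinators $X_1, X_2$ by substitution. Applying Theorem~\ref{thm:1} with $N_i := C_i$ and noting that $C_1, C_2$ contain no free variables $y_j$, the prescribed substitution $[\nabla_1 y_1, \ldots, \nabla_t y_t]$ is automatically vacuous, so we may take $t = 0$. The theorem then supplies an integer $s \geq 0$, normal forms $H_1, \ldots, H_s \in \mathcal{N}$, and free variables $v_1, v_2$ (each occurring free in some $H_k$) with
\[
C_i\,H_1 \ldots H_s = v_i, \qquad i = 1, 2.
\]
Since the right-hand sides are free variables while the $C_i$ carry none, $s = 0$ is impossible, giving $s > 0$ as the corollary requires.

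\emph{Structure of the arguments.} The key observation, extracted from the explicit construction underlying Theorem~\ref{thm:1}, is that each $H_k$ is manufactured by prepending some number of $\mathrm{K}$'s to one of three elementary building blocks: the selector variable $v_1$, the selector variable $v_2$, or the combinator $\mathrm{K}$ itself. Consequently every $H_k$ has one of the three shapes $\mathrm{K}^{a} v_1$, $\mathrm{K}^{b} v_2$, or $\mathrm{K}^{a}\mathrm{K}^{b}$ for suitable integers $a,b \geq 0$.

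\emph{Conclusion.} Perform the global substitution $v_1 \mapsto X_1$, $v_2 \mapsto X_2$. Since $v_1, v_2$ are free and occur only inside the arguments $H_k$ (never inside $C_i$), the substitution commutes with $\lambda$-k-convertibility, and each $H_k$ becomes a combinator $G_k$ of one of the three forms $\mathrm{K}^{a} X_1$, $\mathrm{K}^{b} X_2$, or $\mathrm{K}^{a}\mathrm{K}^{b}$. The identity $C_i\,H_1 \ldots H_s = v_i$ then becomes $C_i\,G_1 \ldots G_s = X_i$, precisely the claim of the corollary.

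\emph{Main obstacle.} The substantive step is the structural assertion about the $H_k$: Theorem~\ref{thm:1} as stated merely guarantees $H_k \in \mathcal{N}$, so the finer shape must be read off from the proof's algorithm. The non-routine work is to verify, by induction on the syntactic disagreement between $N_1$ and $N_2$, that every separating argument produced by that algorithm is indeed an iterated $\mathrm{K}$-covering of one of $v_1$, $v_2$, $\mathrm{K}$. Once that refinement is in hand, the corollary follows by the single-step substitution above.
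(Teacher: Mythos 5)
Your overall route is the same as the paper's: specialize Theorem~\ref{thm:1} to $N_i \equiv C_i$, observe that the substitution $[\nabla_1 y_1,\ldots,\nabla_t y_t]$ is vacuous because combinators have no free variables, and then instantiate the separating variables $v_1, v_2$ to the arbitrary targets $X_1, X_2$. Your argument that $s>0$ (a closed term cannot convert to a free variable) is a sensible addition the paper leaves implicit.

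However, there is a concrete gap in your structural claim about the $H_k$, and it is exactly the point where your final substitution step would fail. Theorem~\ref{thm:1} only asserts $H_k\in\mathcal{N}$, and the construction in Lemmas~\ref{lem:1}--\ref{lem:3} produces arguments that are \emph{not} all of the three shapes $\mathrm{K}^a v_1$, $\mathrm{K}^b v_2$, $\mathrm{K}^a\mathrm{K}^b$: it also produces arguments containing the auxiliary free variables $x_1,\ldots,x_{\overline{n}},\ldots$ used to strip the leading abstractions --- bare occurrences $x_j$, and terms of the form $\nabla\,x_k$ when a principal variable is bound (see Cases~2, 4, 6 of Lemma~\ref{lem:1} and the displayed result of Example~\ref{ex:1}, where arguments such as $x_3$ and $\mathrm{K}\mathrm{K}\,x_4$ occur). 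If you substitute only $v_1\mapsto X_1$, $v_2\mapsto X_2$, those $H_k$ retain free variables and the resulting $G_k$ are not combinators, nor of the shape demanded by the corollary. The paper closes this case with a third clause: whenever $H_k\equiv H_k[x]$ for such an auxiliary variable $x$, one sets $G_k \equiv H_k[\mathrm{I}]$, after which every argument reduces to a selector $\mathrm{K}^a\mathrm{K}^b$ (with $\mathrm{I}=\mathrm{K}^0\mathrm{K}^0$) or to $\mathrm{K}^aX_1$, $\mathrm{K}^bX_2$. Adding that clause repairs your proof; without it, the "global substitution" step does not yield combinators.
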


% 25
The corollary is obtained from the theorem by specifying the structure
of the formulas of which the proof makes use, and by observing that,
if in $N_1,N_2$ no variable occurs freely, we have
$N_i[y_1,\ldots y_t] \equiv N_i[\nabla_1 y_1,\ldots,\nabla_t y_t]
\equiv C_i$ and that, if $H_k\equiv H_k[v_1]$ or $H_k\equiv H_k[v_2]$,
it is sufficient to define
\begin{equation*}
  G_k \equiv H_k[X_i]
\end{equation*}
and that, if $H_k\equiv H_k[x]$, it is sufficient to define
\begin{equation*}
  G_k \equiv H_k[\mathrm{I}]
\end{equation*}

\section{Proof method of the Theorem \ref{thm:1}}

% 26
An equivalence relation $\sim$ is introduced such that if two formulas
are inequivalent the proof is almost immediate (Lemma 1).
% 27
If two distinct formulas are equivalent, a finite chain of pairs of equivalent
formulas is associated with that pair, except for the last pair which
is formed by inequivalent formulas.
% 28
It is shown that if the principal variable of each pair (or the two of
the last one) is different from that of every other pair, then it is
possible to pass from the initial pair to the inequivalent pair with
the operations allowed by the theorem (Lemma 2).
% 29
Finally, it is shown (Lemma 3) that there exists a substitution of
variables which essentially transforms the finite chain, relating to
the given pair of distinct formulas, into a chain of the same length
in which all the principal variables of the pairs are different from
each other.

\section{An equivalence relation}

% 30
Let $N_1,N_2\in\mathcal{N}$ and $N_1\ne N_2$. We have seen that there
must be
\begin{equation*}
  N_i \equiv \lambda t_1\ldots \lambda t_{n_i}\left ( \xi_i\,
    X_1^{(i)} \cdots X_{m_i}^{(i)} \right )\qquad (i = 1,2)
\end{equation*}
where $n_i \geqslant 0, m_i \geqslant 0, X_j^i\in\mathcal{N}$, and
$\xi_i$ are variables.

% 31
Let $\overline{n} = \max\,n_i$. Given a number $f \geqslant
\overline{n}$ and the pair $(N_1,N_2)$, it is uniquely determined the
pair $(M_1,M_2)$ defined by
\begin{equation}
  \label{eq:1}
  M_i = N_i\,x_1x_2\ldots x_f \qquad (i = 1,2)
\end{equation}
where $x_1,x_2,\ldots, x_f$ do not occur in $N_i$.

% 32
Naturally $M_i\in\mathcal{N}$ and from a personal note of
$\lambda$-calculus one has
\begin{equation*}
  N_1 \ne N_2 \,\leftrightarrow\, M_1 \ne M_2
\end{equation*}

% 33
Furthermore there must be
\begin{equation}
  \label{eq:1'}
  M_i \equiv \xi'_i\,\underline{X}_1^{(i)} \cdots
  \underline{X}_{m_i}^{(i)}\,x_{n_i+1}\ldots x_f\qquad (i = 1,2)
\end{equation}
where $\underline{X}_{m_i}^{(i)}, \xi'_i$ are the result of
substitutions\footnote{Hereafter the notation of substitution has been
  changed from $[t_k / x_k]$ to $[t_k \mapsto x_k]$ for clarity.}
$[t_k \mapsto x_k]$ in $X_j^{(i)}, \xi_i$.

% 34
We define the following relation among normal formulas:
\begin{equation*}
  N_1 \sim N_2 \,\leftrightarrow\,
  \xi_1 = \xi_2\;\wedge\; n_1 - m_1 = n_2 - m_2
\end{equation*}

% 35
Obviously this is an equivalence relation, therefore
\begin{align*}
  N_1 \not\sim N_2 &\rightarrow N_1 \ne N_2\\
  N_1 \sim N_2 &\rightarrow M_1\sim M_2
\end{align*}

% 36
Let us rewrite (\ref{eq:1}) in a more uniform way:
\begin{equation}
  \label{eq:2}
  M_i \equiv z_i\,Y_1^{(i)}\cdots Y_j^{(i)}\cdots
  Y_{g_i}^{(i)}\qquad(i = 1,2)
\end{equation}
where naturally $z_i \equiv \xi'_i, \underline{X}_k^{(i)} \equiv
Y_k^{(i)}, x_{n_i+h} \equiv Y_{m_i+h}$ and that
\begin{equation*}
  g_i = m_i + f - n_i\qquad (f\geqslant\overline{n})
\end{equation*}

% 37
It is obvious that
\begin{equation*}
  M_1 \sim M_2 \,\leftrightarrow\, z_1 = z_2 \,\wedge\, g_1 = g_2.
\end{equation*}

% 38
From the above the following statement is obvious:

% 39
If $N_1 \ne N_2$ and $N_1 \sim N_2$, then, putting $g = m_i +
\overline{n} - n_i$, that is $f = \overline{n}$, in (\ref{eq:2}), we
get $Y_j^{(1)} \ne Y_j^{(2)}$ for at least one value of the index $j$,
$1 \leqslant j \leqslant g$.

\begin{lemma}
  \label{lem:1}
  % 40
  Theorem \ref{thm:1} holds when $N_1 \not\sim N_2$.
\end{lemma}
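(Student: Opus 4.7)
My plan is to split $N_1\not\sim N_2$ into two cases according to the definition of $\sim$: either the head variables $z_1,z_2$ of $M_1,M_2$ differ, or $z_1=z_2$ but the arities $g_1,g_2$ differ. In both cases I would take $s=\overline{n}+r$ for a suitably chosen $r\geqslant 0$, apply $H_1,\ldots,H_s$ after the substitution $y_k\mapsto \nabla_k y_k$, and $\beta$-reduce $N_i[\nabla_1 y_1,\ldots,\nabla_t y_t]\,H_1\cdots H_s$ to the form $F_i\,A_1^{(i)}\cdots A_{g_i+r}^{(i)}$. Here $F_i$ is either $H_{k_i}$ (when the principal variable $\xi_i$ is the bound $t_{k_i}$) or $\nabla_{p_i}\,y_{p_i}$ (when $\xi_i$ is a free variable $y_{p_i}$), and each $A_j^{(i)}$ with $j>m_i$ is some $H_{n_i+j-m_i}$; the identity $g_i=m_i+\overline{n}-n_i$ makes tracking the indices of these trailing $H$'s routine and in particular always gives $A_{g_i+r}^{(i)}=H_s$.

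When $z_1\ne z_2$ the heads $F_1,F_2$ are controlled by disjoint data, so I can set them independently. For a bound head I would choose $H_{k_i}=\lambda b_1\cdots\lambda b_{g_i+r}.\,v_i$, letting $N_i$ reduce straight to $v_i$ (a free variable $v_i$ is permitted inside an $H_k$). For a free head the combinator $\nabla_{p_i}$ cannot contain $v_i$, so I would instead take $\nabla_{p_i}$ to be a projection onto a chosen coordinate and arrange via one of the extra arguments $H_{\overline{n}+j}$ that this coordinate equals $v_i$. With $r$ picked large enough to keep all specialised indices distinct (for example $r=0$ when both heads are bound, $r=2$ when both are free), this case is essentially routine.

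When $z_1=z_2$ but (say) $g_1<g_2$, the same function $F$ sits at the head of both reductions, and the distinction must come from the arity alone; this is the real obstacle. The key move is to take $F=\lambda b_1\cdots\lambda b_{g_1+r}.\,b_{g_1+r}$, the projection onto its last argument. For $N_1$ this returns $A_{g_1+r}^{(1)}=H_s$, and I would set $H_s=v_1$. For $N_2$ it returns $A_{g_1+r}^{(2)}$ applied to the $g_2-g_1$ trailing arguments; the choice $r=g_2-g_1+1$ makes $A_{g_1+r}^{(2)}=H_{\overline{n}+1}$, and I would set $H_{\overline{n}+1}=\lambda a_1\cdots\lambda a_{g_2-g_1}.\,v_2$, absorbing those extra arguments. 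A short check using $g_i=m_i+\overline{n}-n_i$ confirms that the specialised indices $H_s$, $H_{\overline{n}+1}$, and (if applicable) the $H_k$ carrying the head, are pairwise distinct, so the construction is consistent, and the lemma follows with $v_1\ne v_2$ both occurring free among the $H_k$.
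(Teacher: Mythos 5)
Your proposal is correct and follows essentially the same route as the paper: the same case split ($\xi_1\ne\xi_2$ versus equal head variable with $n_1-m_1\ne n_2-m_2$), with the head replaced by a selector/projection that either outputs $v_i$ directly or, in the arity case, lands on the last argument of the shorter list and on an earlier, $v_2$-absorbing argument of the longer one. The only difference is packaging — you put the constants $v_i$ inside the $H_k$'s where the paper appends $v_1\,v_2$ and uses $\mathrm{K}^a\mathrm{K}$, $\mathrm{K}^a\mathrm{I}$ selectors — which is immaterial.
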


\begin{proof}
  % 41
  It is sufficient to check the alternative cases:
  \begin{enumerate}[(i)]
  \item $\xi_1 \ne \xi_2$
  \item $\xi_1 = \xi_2 \;\wedge\; n_1 - m_1 \ne n_2 - m_2$
  \end{enumerate}

% 42
In the case (i) we consider the formulas $M_i\,v_1 v_2\,[z_1,z_2]$ where
$M_i$ are defined in (\ref{eq:1}), (\ref{eq:1'}) and (\ref{eq:2}), and
$f = \overline{n}$ is chosen. It is easy to verify that
\begin{equation}
  \label{eq:3}
  M_i\,v_1
  v_2\,[\mathrm{K}^{1+g_1}\,\mathrm{K}\,z_1,\mathrm{K}^{2+g_2}\,\mathrm{I}\,z_2]
  = v_i\qquad(i = 1,2)
\end{equation}

% 43
In the case (ii) $f = 1 + \overline{n} + p$ is chosen, where $p =
|(n_1 - m_1) - (n_2 - m_2)|$ and let $\overline{g} =
\underset{i}{\max}\,(m_i + \overline{n} - n_i)$.

% 44
As a consequence of the choice of $f$ we obtain in (\ref{eq:2}) that
\begin{equation*}
  Y_{1+\overline{g}}^{(1)} = x_{1+\overline{n}}\qquad,\qquad
  Y_{1+\overline{g}}^{(2)} = x_{1+\overline{n} +p}
\end{equation*}
or
\begin{equation*}
  Y_{1+\overline{g}}^{(1)} = x_{1+\overline{n} +p}\qquad,\qquad
  Y_{1+\overline{g}}^{(2)} = x_{1+\overline{n}}\enspace.
\end{equation*}

% 45
We consider the formulas
$M_i\,[x_{1+\overline{n}},x_{1+\overline{n}+p},z]$ where $z = z_1 =
z_2$. It is easy to verify that
\begin{equation}
  \label{eq:4}
  M_i\,[\mathrm{K}^p v_1,v_2,\mathrm{K}^{1+\overline{g}}\,\mathrm{I}\,z]
  = v_i\quad\text{or}\quad v_{3-i}\qquad(i = 1,2)
\end{equation}

% 46
The formulas (\ref{eq:3}) and (\ref{eq:4}) essentially prove
Lemma~\ref{lem:1}.
% 47
In fact, they summarise four possible cases for (\ref{eq:3}):
$\xi_1,\xi_2$ both free; $\xi_1$ free and $\xi_2$ bound; $\xi_1$ bound
and $\xi_2$ free; $\xi_1$ and $\xi_2$ both bound. And for (\ref{eq:4})
two possible cases: $\xi$ free; $\xi$ bound.

% 48
\paragraph{Case 1}
Let $\xi_1 \equiv y_1$, $\xi_2 \equiv y_2$. Then
(\ref{eq:3}) can be rewritten to
\begin{equation*}
  N_i\,[\nabla_1y_1,\nabla_2y_2]\,x_1\ldots
  x_{\overline{n}}\,v_1v_2 = v_i\qquad (i = 1,2)
\end{equation*}
where $\nabla_1 \equiv \mathrm{K}^{g_1+1}\mathrm{K}$,
$\nabla_2 \equiv \mathrm{K}^{g_2+2}\mathrm{I}$.

% 49
\paragraph{Case 2 (and similarly Case 3)}
Let $\xi_1 \equiv y_1$, $\xi_2 \equiv t_k$. Then (\ref{eq:3}) can be
rewritten to
\begin{equation*}
  N_i\,[\nabla_1y_1]\,x_1\ldots x_{k-1}(\nabla_2 x_k)\,x_{k+1}\ldots
  x_{\overline{n}}\,v_1v_2 = v_i\qquad (i = 1,2) % TODO: check \nabla_2 x_k
\end{equation*}

% 50
\paragraph{Case 4}
Let $\xi_1 \equiv t_k$, $\xi_2 \equiv t_l$, $l \ne k$. Then we have
\begin{equation*}
  N_i\,x_1\ldots x_{k-1}(\nabla_1 x_k)\,x_{k+1}\ldots
  x_{l-1}\, (\nabla_2 x_l)\,x_{l+1}\ldots x_{\overline{n}}\,v_1v_2 = v_i\qquad (i = 1,2)
\end{equation*}

% 51
\paragraph{Case 5}
Let $\xi = y_1$ free. Then (\ref{eq:4}) can be rewritten to
\begin{equation*}
  N_i\,[\nabla_1y_1]\,x_1\ldots
  x_{\overline{n}}\,(\mathrm{K}^p v_1)\,
  x_{\overline{n}+2}\ldots x_{\overline{n}+p}\,v_2 =
  v_i\quad\text{or}\quad v_{3-i}\qquad(i = 1,2)
\end{equation*}
where $\nabla_1 \equiv \mathrm{K}^{\overline{g}+1}\mathrm{I}$.

% 52
\paragraph{Case 6}
Let $\xi = t_k$ bound. Then we have
\begin{equation*}
  N_i\,x_1\ldots x_{k-1}\,(\nabla_1 x_k)\,x_{k+1}\ldots
  x_{\overline{n}}\,(\mathrm{K}^p v_1)\,x_{\overline{n}+2}\,v_2 =
  v_i\quad\text{or}\quad v_{3-i}\qquad(i = 1,2)
\end{equation*}

% 53
The previous relations represent the thesis of Theorem~\ref{thm:1} (up to a
possible exchange between $v_1$ and $v_2$ in the case (ii)), therefore they prove
Lemma~\ref{lem:1}.
\end{proof}

% 54
Now we want to examine the case $N_1 \ne N_2$, $N_1 \sim N_2$.

% 55
From the previous statement, Lemma~\ref{lem:1} follows that, for $f =
\overline{n}$, there exists a pair $(N_1^{(1)}, N_2^{(1)})$ of
subformulas $N_1^{(1)} \ne N_2^{(1)}$ of the pair $(M_1,M_2)$
uniquely associated to $(N_1,N_2)$.
For $(N_1^{(1)}, N_2^{(1)})$, either $N_1^{(1)} \not\sim N_2^{(1)}$ or
$N_1^{(1)} \sim N_2^{(1)}$ is true.
% 56
In the latter case, repeating the reasoning done for $(N_1,N_2)$ we
can uniquely define $(M_1^{(1)},M_2^{(1)})$, and consider a pair of
subformulas $N_1^{(2)} \ne N_2^{(2)}$ and so on.
% 57
The following statement therefore applies:

% 58
To the pair $(N_1^{(0)}, N_2^{(0)})$ with $N_1^{(0)} \ne N_2^{(0)}$,
it is naturally associated with an integer $0 \leqslant \sigma
\leqslant \infty$ and a chain of formulas $(M_1^{(l)}, M_2^{(l)})$
($l = 0,\ldots,\sigma$) all having the structure (\ref{eq:2}) and such
that the following relations hold
\begin{equation*}
  M_1^{(l)} \sim M_2^{(l)}\quad l = 0,\ldots,\sigma-1,\quad
  M_1^{(\sigma)} \not\sim M_2^{(\sigma)}\enspace.
\end{equation*}

% 59
The finiteness of $\sigma$ is a consequence of the fact that, in the
transition from $(N_1^{(l)}, N_2^{(l)})$  to $(N_1^{(l+1)},
N_2^{(l+1)})$, in at least one of the two formulas constituting the
pair, the number of symbols decreases.

\begin{ex}
  \label{ex:1}
  Let
  \begin{align*}
    N_1^{(0)} &\equiv \lambda t_1\lambda t_2\lambda
                t_3\,(t_1\lambda u\,t_2\lambda v\,(t_2\,t_1\lambda
                z\lambda s\,(z\,v))\,t_3) \\
    N_2^{(0)} &\equiv \lambda u\,\lambda v\,((u\,\lambda t\,v)\,
                (\lambda x\,(v\,u\,\lambda z\,z)))
  \end{align*}
  We have
  \begin{align*}
    M_1^{(0)} &= N_1^{(0)}x_1x_2x_3 = x_1\lambda u\,x_2\lambda v\,(x_2\,x_1\lambda
                z\lambda s\,(z\,v))\,x_3 \\
    M_2^{(0)} &= N_2^{(0)}x_1x_2x_3 = ((x_1\lambda t\,x_2)\,
                (\lambda x\,(x_2x_1\lambda z z)))\,x_3 \\
    N_1^{(1)} &\equiv \lambda v\,(x_2x_1\lambda z\lambda s\,(z\,v)) \\
    N_2^{(1)} &\equiv \lambda x\,(x_2x_1\lambda z\,z)
  \end{align*}
  \begin{align*}
    M_1^{(1)} &= N_1^{(1)}x_4 = x_2x_1\lambda z\lambda s\,(z\,x_4) \\
    M_2^{(1)} &= N_2^{(1)}x_4 = x_2x_1\lambda z\,z \\
    N_1^{(2)} &\equiv \lambda z\,\lambda s\,(z\,x_4) \\
    N_2^{(2)} &\equiv \lambda z\,z \\
    M_1^{(2)} &= N_1^{(2)}x_5x_6 = x_5x_4 \\
    M_2^{(2)} &= N_2^{(2)}x_5x_6 = x_5x_6 \\
    N_1^{(3)} &\equiv M_1^{(3)} \equiv x_4 \\
    N_2^{(3)} &\equiv M_2^{(3)} \equiv x_6
  \end{align*}
\end{ex}

% 62
Now we proceed to the proof of the following

\begin{lemma}
  \label{lem:2}
  % 63
  If $N_1^{(0)} \ne N_2^{(0)}$ and all the principle variables
  $z^{(0)},\ldots,z^{(\sigma-1)},z_i^{(\sigma)}$ of a chain
  $(M_1^{(l)},M_2^{(l)})$ ($\,l = 0,\ldots,\sigma$) associated with
  $(N_1^{(0)},N_2^{(0)})$ are distinct from each other (but it could
  be that $z_1^{(\sigma)} = z_2^{(\sigma)}$), then Theorem~\ref{thm:1} is
  valid for $N_1^{(0)}, N_2^{(0)}$.
\end{lemma}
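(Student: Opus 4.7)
The plan is to induct on the length $\sigma$ of the chain associated with $(N_1^{(0)}, N_2^{(0)})$. For $\sigma = 0$ we have $N_1^{(0)} \not\sim N_2^{(0)}$, so Lemma~\ref{lem:1} applies directly. For $\sigma \geqslant 1$, the tail chain $(M_1^{(l)}, M_2^{(l)})_{l = 1, \ldots, \sigma}$ is naturally associated with $(N_1^{(1)}, N_2^{(1)})$, has length $\sigma - 1$, and its principal variables $z^{(1)}, \ldots, z^{(\sigma-1)}, z_1^{(\sigma)}, z_2^{(\sigma)}$ inherit pairwise distinctness from the hypothesis. The inductive hypothesis therefore furnishes combinators $\nabla'_1, \ldots, \nabla'_{t'}$, variables $y'_1, \ldots, y'_{t'}$ (free in $N_i^{(1)}$), and formulas $H'_1, \ldots, H'_{s'}$ such that $N_i^{(1)}[\nabla'_1 y'_1, \ldots, \nabla'_{t'} y'_{t'}]\,H'_1 \cdots H'_{s'} = v_i$ for $i = 1,2$.

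To lift to level $0$, write $M_i^{(0)} \equiv z^{(0)}\,Y_1^{(i,0)} \cdots Y_{g_0}^{(i,0)}$ with $Y_{j_0}^{(i,0)} \equiv N_i^{(1)}$ for an index $j_0$ at which the $Y$'s disagree. I would build the explicit projection combinator $U \equiv \lambda a_1 \ldots \lambda a_{g_0}\,a_{j_0}$ out of $\mathrm{K}$ and $\mathrm{I}$, and arrange for the head of $M_i^{(0)}$ to reduce to $U$: if $\xi^{(0)}$ is a free variable $y_0$, extend the substitution list with $y_0 \mapsto (\mathrm{K}\,U)\,y_0$ (which $\beta$-reduces to $U$ regardless of $y_0$); if instead $\xi^{(0)} \equiv t_{k_0}$ is bound, take the $k_0$-th of the $\overline{n}^{(0)}$ positional arguments fed to $N_i^{(0)}$ to be $U$ itself. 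The remaining positional arguments are the $x_k^{(0)}$ themselves, except that any $x_k^{(0)}$ coinciding with an IH variable $y'_j$ is pre-substituted by replacing the $k$-th positional argument with $\nabla'_j\,x_k^{(0)}$; and any free variable of $N_i^{(0)}$ matching an IH variable receives the corresponding $\nabla'_j$ in the substitution list. Finally $H'_1, \ldots, H'_{s'}$ is appended. After $\overline{n}^{(0)}$ head reductions the expression becomes $U\,Y'_1 \cdots Y'_{g_0}\,H'_1 \cdots H'_{s'}$ (where each $Y'_k$ is $Y_k^{(i,0)}$ with the inside substitutions propagated), which the selector collapses to $N_i^{(1)}[\nabla'_1 y'_1, \ldots]\,H'_1 \cdots H'_{s'} = v_i$ by the inductive hypothesis.

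The delicate point, which I expect to be the main obstacle, is that forcing $z^{(0)} \mapsto U$ uniformly also fires at non-principal occurrences of $z^{(0)}$ inside the selected spine subformula $Y_{j_0}^{(i,0)} = N_i^{(1)}$, so these occurrences become $U$ as well, \emph{a priori} clashing with the IH as stated about the unmodified $N_i^{(1)}$. The distinctness hypothesis of Lemma~\ref{lem:2} is exactly what defuses this: $z^{(0)}$ cannot appear on the spine of any $N_i^{(l)}$ with $l \geqslant 1$, since it would then have to coincide with some $z^{(l)}$ or $z_i^{(\sigma)}$, contrary to hypothesis. Every occurrence of $z^{(0)}$ inside an $N_i^{(l)}$ with $l \geqslant 1$ therefore lies in a sub-expression that fails to survive the cascade of selectors — it is annihilated either by the projection combinator used at some subsequent level (being in a non-spine argument position there) or by a $\mathrm{K}^a$-absorber from Lemma~\ref{lem:1} at level $\sigma$. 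Consequently the uniform substitution is in fact harmless on the spine, and the inductive hypothesis applies to the lifted expression without modification.
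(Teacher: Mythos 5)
Your proof is correct and follows essentially the same route as the paper: a cascade of $\mathrm{K}$-built selectors substituted at the (pairwise distinct, hence non-interfering) principal variables collapses the chain level by level onto the $\not\sim$ pair, where Lemma~\ref{lem:1} finishes. The only difference is organizational --- you run a forward induction on $\sigma$ and must then argue (as you do, correctly, via the annihilation of all non-spine occurrences) that the level-$0$ substitution is invisible to the inductive witnesses, whereas the paper applies all substitutions $z^{(l)}\mapsto\nabla^{(l)}z^{(l)}$ simultaneously and verifies by a downward induction from $l=\sigma$ that the substituted pairs still form a chain.
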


\begin{ex}[Continuation of Example~\ref{ex:1}]
  % 64
  In this case $\sigma = 3$ and also $z^{(0)} \equiv x_1$, $z^{(1)}
  \equiv x_2$, $z^{(2)} \equiv x_5$, $z_1^{(3)} \equiv x_4$,
  $z_2^{(3)} \equiv x_6$.
  % 65
  The premises of Lemma~\ref{lem:2} are verified. Furthermore
  \begin{align*}
    N_i^{(1)}\quad &\text{is}\quad Y_2^{(i)}\quad\text{of}\quad
                     M_i^{(0)}\,[z^{(0)},\ldots,z_1^{(3)},
                     z_2^{(3)}],\qquad g = 3 \\
    N_i^{(2)}\quad &\text{is}\quad Y_2^{(i)}\quad\text{of}\quad
                     M_i^{(1)}\,[z^{(0)},\ldots,z_1^{(3)},
                     z_2^{(3)}],\qquad g = 2 \\
    N_i^{(3)}\quad &\text{is}\quad Y_1^{(i)}\quad\text{of}\quad
                     M_i^{(2)}\,[z^{(0)},\ldots,z_1^{(3)},
                     z_2^{(3)}],\qquad g = 1
  \end{align*}

  % 66
  It is easy to verify, taking also account of Lemma~\ref{lem:1}
  for $z_1^{(3)}, z_2^{(3)}$, that
  \begin{align*}
    M_i^{(0)}\,[\mathrm{K}^2\mathrm{K}\,z^{(0)},
    \mathrm{K}^2\mathrm{I}\,z^{(1)},
    \mathrm{K}\mathrm{I}\,z^{(2)},
    \mathrm{K}\mathrm{K}\,z_1^{(3)},
    \mathrm{K}^2\mathrm{I}\,z_2^{(3)} ] &=
    N_i^{(1)}\,[\mathrm{K}^2\mathrm{K}\,z^{(0)},\ldots,\mathrm{K}^2\mathrm{I}\,z_2^{(3)}] \\
    M_i^{(1)}\,[\mathrm{K}^2\mathrm{K}\,z^{(0)},
    \mathrm{K}^2\mathrm{I}\,z^{(1)},
    \mathrm{K}\mathrm{I}\,z^{(2)},
    \mathrm{K}\mathrm{K}\,z_1^{(3)},
    \mathrm{K}^2\mathrm{I}\,z_2^{(3)} ] &=
    N_i^{(2)}\,[\mathrm{K}^2\mathrm{K}\,z^{(0)},\ldots,\mathrm{K}^2\mathrm{I}\,z_2^{(3)}] \\
    M_i^{(2)}\,[\mathrm{K}^2\mathrm{K}\,z^{(0)},
    \mathrm{K}^2\mathrm{I}\,z^{(1)},
    \mathrm{K}\mathrm{I}\,z^{(2)},
    \mathrm{K}\mathrm{K}\,z_1^{(3)},
    \mathrm{K}^2\mathrm{I}\,z_2^{(3)} ] &=
    N_i^{(3)}\,[\mathrm{K}^2\mathrm{K}\,z^{(0)},\ldots,\mathrm{K}^2\mathrm{I}\,z_2^{(3)}]
  \end{align*}
  from which we finally get
  \begin{equation*}
    N_i^{(0)}\,(\mathrm{K}^2\mathrm{K}\,x_1)\,(\mathrm{K}^2\mathrm{I}\,x_2)\,x_3\,
    (\mathrm{K}\mathrm{K}\,x_4)\,(\mathrm{K}\mathrm{I}\,x_5)\,
    (\mathrm{K}^2\mathrm{I}\,x_6)\,v_1v_2 = v_i,
  \end{equation*}
  or even
  \begin{equation*}
    N_i^{(0)}\,(\mathrm{K}\mathrm{K})\,(\mathrm{K}\mathrm{I})\,\mathrm{I}\mathrm{K}\mathrm{I}\,
    (\mathrm{K}\mathrm{I})\,v_1v_2 = v_i\enspace.
  \end{equation*}
\end{ex}

\begin{proof}% [Proof (of Lemma~\ref{lem:2})]
  % 67
  For the hypothesis $l \ne k \rightarrow z^{(l)} \ne z^{(k)},
  z_i^{(\sigma)} \ne z^{(k)}$ ($l,k = 0,\ldots,\sigma, i = 1,2$),
  where $z^{(l)}$ are free in $M_i^{(l)} \equiv
  M_i^{(l)}\,[z^{(0)},\ldots z^{(\sigma-1)}, z_1^{\sigma},
  z_2^{\sigma}]$, $l = 0,\ldots,\sigma$.
  % 68
  Now let $j_l$ be the value of the index $j$ in $M_i^{(l)}$
  identifying $N_i^{(l+1)}$, $l = 0,\ldots,\sigma-1$.

  % 69
  We have the following relations:
  \begin{equation}
    \label{eq:5}
    N_i^{(l)}\,x_{r_l+1}\ldots x_{r_l+\overline{n}_l} =
    M_i^{(l)}\,[z^{(0)},\ldots,z^{(\sigma-1)},z_1^{(\sigma)},z_2^{(\sigma)}]
  \end{equation}
  where
  \begin{equation*}
    r_l = \sum_0^{l-1}\,{\overline{n}_k}\quad,\quad \overline{n}_l = \underset{i}{\max}\,n_i^{(l)}
  \end{equation*}

  \begin{equation}
    \label{eq:6}
    \begin{split}
    & M_i^{(l)}\,[\nabla^{(0)}z^{(0)},\ldots,\nabla^{(\sigma-1)}z^{(\sigma-1)},\nabla_1^{(\sigma)}z_1^{(\sigma)},
    \nabla_2^{(\sigma)}z_2^{(\sigma)}] = \\
    & = N_i^{(l+1)}\,[\nabla^{(0)}z^{(0)},\ldots,\nabla^{(\sigma-1)}z^{(\sigma-1)},\nabla_1^{(\sigma)}z_1^{(\sigma)},
    \nabla_2^{(\sigma)}z_2^{(\sigma)}]
    \end{split}
  \end{equation}
where $\nabla^{(l)} = \mathrm{K}^{j_l}\mathrm{K}^{g_l-j_l}$, $g_l =
m_i^{(l)} + \overline{n}_l - n_i^{(l)}$, $l = 0,\ldots,\sigma-1$, and
where $\nabla_1^{(\sigma)}, \nabla_2^{(\sigma)}$ are obtained from
formulas (\ref{eq:3}) or (\ref{eq:4}) of Lemma~\ref{lem:1}.

% 70
The lemma is true for $\sigma = 0$, because in this case $N_1^{(0)}
\not\sim N_2^{(0)}$ and the thesis is those of Lemma~\ref{lem:1}.
% 71
Now let $\sigma > 0$. We have
\begin{equation}
  \label{eq:7}
  M_1^{(\sigma)}\,[\ldots,\nabla^{(l)} z^{(l)},\ldots] \not\sim
  M_2^{(\sigma)}\,[\ldots,\nabla^{(l)} z^{(l)},\ldots]
\end{equation}
in which all substitutions $[z^{(l)} \mapsto \nabla^{(l)} z^{(l)}]$ with $l \ne
\sigma$,
being $z^{(l)} \ne z_i^{(\sigma)}$, can alter neither the number nor
the position of the components of $M_i^{(\sigma)}$, which remain
normal.
% 72
Even if subformulas of $M_i^{(\sigma)}$ in corresponding positions
become equal, the relation $\not\sim$ is preserved since it is defined
independently of this.

% 73
It is deduced from (\ref{eq:7}) that
$N_1^{(\sigma)}\,[\ldots,\nabla^{(l)} z^{(l)},\ldots] \not\sim
N_2^{(\sigma)}\,[\ldots,\nabla^{(l)} z^{(l)},\ldots]$, and from
(\ref{eq:6}) that $M_1^{(\sigma-1)}\,[\ldots,\nabla^{(l)}
z^{(l)},\ldots] \ne M_2^{(\sigma-1)}\,[\ldots,\nabla^{(l)}
z^{(l)},\ldots]$.

% 74
Note that the passage from $M_i^{(\sigma)}$ to $M_i^{(\sigma-1)}$ can
be repeated to pass from $M^{(\sigma-1)}$ to $M^{(\sigma-2)}$ while
maintaining the relations $\ne$ and $\sim$ between the components.
% 75
This corresponds to a proof by induction on $\sigma$ of the property
that $(M_1^{(l)}\,[\ldots,\nabla^{(f)} z^{(f)},\ldots],
M_2^{(l)}\,[\ldots,\nabla^{(f)} z^{(f)},\ldots])$ continues to be a
chain associated with $(M_1^{(0)}\,[\ldots,\nabla^{(f)} z^{(f)},\ldots],
M_2^{(0)}\,[\ldots,\nabla^{(f)} z^{(f)},\ldots])$.
% 76
Since the passage from the pair $(N_1^{(0)}, N_2^{(0)})$ to the previous pair
occurs with operations allowed by Theorem~\ref{thm:1},
Lemma~\ref{lem:2} is proved.

\end{proof}

% 77
The following example shows the need for inequality between principal
variables of a chain associated with two given formulas for
Lemma~\ref{lem:2} to be valid.

\begin{ex}
  \label{ex:2}
  % 78
  Let
  \begin{align*}
    N_1^{(0)} & \equiv M_1^{(0)} [y_1] \equiv y_1 y_2\,(\underline{y_1 \underline{y_2} y_3}) \\
    N_2^{(0)} & \equiv M_2^{(0)} [y_1] \equiv y_1 y_2\,(\underline{y_1 \underline{y_3} y_3})
  \end{align*}
  % 79
  For the sake of brevity we have underlined $N_i^{(1)} \equiv
  M_i^{(1)}$ and $N_i^{(2)} \equiv M_i^{(2)}$. We have $\sigma = 2$
  and $N_1^{(0)} \ne N_2^{(0)}$, $M_1^{(2)} \not\sim M_2^{(2)}$.

  % 80
  However we note that
  \begin{align*}
    & M_i^{(1)} [\mathrm{K}\mathrm{K}\,y_1] = N_i^{(2)} && \text{but
      unfortunately}\quad M_i^{(0)}\,[\mathrm{K}\mathrm{K}\,y_1] = y_2 \\
    & M_i^{(0)} [\mathrm{K}^2\mathrm{I}\,y_1] =
      N_i^{(1)} [\mathrm{K}^2\mathrm{I}\,y_1] && \text{but
      unfortunately}\quad N_i^{(1)}\,[\mathrm{K}^2\mathrm{I}\,y_1] = y_3,
  \end{align*}
that is, the presence of two equal principal variables in different
pairs of the chain can generate a certain incompatibility from the
selectors that one would like to use.

\end{ex}

% 81
To complete the proof of Theorem~\ref{thm:1} it is sufficient to prove the
following.

\begin{lemma}
  \label{lem:3}
  % 82
  Let $N_1^{(0)} \ne N_2^{(0)}$ and a chain $(M_1^{(l)},M_2^{(l)})$ ($l
  = 0,\ldots,\sigma$) is associated to $(N_1^{(0)}, N_2^{(0)})$,
  respectively with principle variables
  $z^{(0)},\ldots,z^{(\sigma-1)},z_i^{(\sigma)}$, $i = 1,2$, whose
  components are written as
  \begin{equation*}
    M_i^{(l)}\,[z^{(0)},\ldots, z^{(\sigma-1)},z_1^{(\sigma)}, z_2^{(\sigma)}]\enspace.
  \end{equation*}

  % 83
  Then there exist 3 sufficiently large integers $h_1,h_2,h_3$, such that, placing
  \begin{equation*}
    \nabla_h \equiv \lambda u\lambda t_1\ldots \lambda
    t_{h+1}\,(t_{h+1}\,u\,t_1\ldots t_h)
  \end{equation*}
  it holds that:
  \begin{enumerate}[(i)]
    % 84
  \item If $(\underline{M}_1^{(l)},\underline{M}_2^{(l)})$ is the pair
    uniquely associated with
    \begin{equation*}
      (M_1^{(l)}\,[\nabla_{h_3}z^{(0)},\ldots,
    \nabla_{h_3}z^{(\sigma-1)},\nabla_{h_1}z_1^{(\sigma)},
    \nabla_{h_2}z_2^{(\sigma)}],M_2^{(l)}[\ldots]),
  \end{equation*}
  we have that
    $(\underline{M}_1^{(l)},\underline{M}_2^{(l)}), l =
    0,\ldots,\sigma$ is a chain associated with
    $(\underline{M}_1^{(l)}[\ldots],\underline{M}_2^{(l)}[\ldots])$.
    % 85
  \item The principle variables
    $\underline{z}^{(0)},\ldots,\underline{z}^{(\sigma-1)},\underline{z}^{(\sigma)}$
    of $(\underline{M}_1^{(l)},\underline{M}_2^{(l)})$ are all
    distinct from each other and are distinct from each $z^{(l)}$, $z_i^{(\sigma)}$.
  \end{enumerate}
\end{lemma}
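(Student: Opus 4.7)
The core of the argument is a single mechanical fact about $\nabla_h$: on any term of shape $z\,T_1\cdots T_m$ with $m\geq h+1$, the substitution $[z\mapsto\nabla_h\,z]$ head-reduces to $T_{h+1}\,z\,T_1\cdots T_h\,T_{h+2}\cdots T_m$. Thus $\nabla_h$ acts as a generalized B\"{o}hm selector: the $(h+1)$-th argument is promoted to the new principal variable, all other arguments are merely permuted, and normality is preserved. My plan is to apply this uniformly at every level of the chain, choosing $h_1,h_2,h_3$ so that each newly promoted head lands on a fresh $x$-variable occurring nowhere else.

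Recall from (\ref{eq:2}) that each $M_i^{(l)}\equiv z^{(l)}\,Y_1^{(i)}\cdots Y_{g_l}^{(i)}$, whose tail (positions past $\max_i m_i^{(l)}$) is occupied by the fresh variables $x_{r_l+1},\ldots,x_{r_l+\overline{n}_l}$ introduced when building the chain. Since $g_l$ is common to both $i=1,2$ and $n_i^{(l)}-m_i^{(l)}$ is constant in $i$ (a consequence of $M_1^{(l)}\sim M_2^{(l)}$), the tail position $j$ in $M_1^{(l)}$ and $M_2^{(l)}$ is occupied by the same fresh $x$. I would choose $h_3$ so that $h_3+1$ lies strictly inside the fresh tail at every $l<\sigma$ and past every pivot index $j_l$; this is permitted because the chain construction allows any $f\geq\overline{n}$, so we may enlarge each tail to arbitrary length by taking $f$ big enough. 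With such $h_3$, the substitution $[z^{(l)}\mapsto\nabla_{h_3}z^{(l)}]$ promotes a specific fresh variable $\underline{z}^{(l)}$ to the new head, and because the fresh-variable ranges at different levels are disjoint ($r_{l+1}-r_l=\overline{n}_l$), the $\underline{z}^{(l)}$ are pairwise distinct. For the final level I take $h_1\neq h_2$, both inside the fresh tail at level $\sigma$, so that $\underline{z}_1^{(\sigma)}$ and $\underline{z}_2^{(\sigma)}$ are distinct fresh variables (even when $z_1^{(\sigma)}=z_2^{(\sigma)}$) and distinct from every $\underline{z}^{(l)}$. This yields part~(ii).

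For part~(i) I would show that substitution commutes with the chain construction. Because the head reduction only permutes arguments, every $Y_j^{(i)}$ survives intact (merely relocated) inside $\underline{M}_i^{(l)}$; in particular the $j_l$-th argument still determines the next pair, so $\underline{N}_i^{(l+1)}$ is read off from $\underline{M}_i^{(l)}$ in precisely the same way that $N_i^{(l+1)}$ was read off from $M_i^{(l)}$. The relations $\sim$ and $\not\sim$ are preserved because the uniform substitution alters both $M_1^{(l)}$ and $M_2^{(l)}$ in parallel, changing neither the common head nor the invariant $n-m$; this is essentially the observation already made around (\ref{eq:7}) in the proof of Lemma~\ref{lem:2}. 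The main obstacle, I expect, is the bookkeeping around the rotation: one must check that $h_3+1$ does not accidentally coincide with any pivot index $j_l$ (which would conflate the promoted head with the very subformula we need for the next pair), and that substituting $\nabla_{h_3}$ at the head creates no $\eta$-redex that could push $\underline{M}_i^{(l)}$ out of $\mathcal{N}$. Both requirements are met simultaneously by choosing $h_1,h_2,h_3$ large enough while extending the fresh tails (through large $f$) to leave room, which is precisely the content of the ``sufficiently large'' in the statement.
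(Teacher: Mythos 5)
Your mechanism is genuinely different from the paper's, and the difference matters. You put $\nabla_h$ in the \emph{over-applied} regime ($g_l\geq h+1$), so that an \emph{existing} argument of $M_i^{(l)}$ is promoted to the head; to make that argument a harmless fresh variable you must first pad every tail by enlarging $f$ beyond $\overline{n}_l$. The paper instead takes $h_1=h_2=h_3=\overline{g}$ (the maximum of all the grades), so that $\nabla_{h}z$ is \emph{under}-applied: $M_i^{(l)}[\ldots]$ becomes the abstraction (\ref{eq:8}) of positive order $h_j+1-g_i$, and the new principal variable is the brand-new variable substituted for $t_{h_j+1}$ when the associated pair $\underline{M}_i^{(l)}$ is formed. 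That route needs no padding, keeps the chain canonical (so ``the pair uniquely associated'' in part (i) applies literally), and makes the distinctness in part (ii) automatic except for the single possible clash $\underline{z}_1^{(\sigma)}=\underline{z}_2^{(\sigma)}$ when $g_1^{(\sigma)}=g_2^{(\sigma)}$, repaired by bumping $h_2$ to $\overline{g}+1$.

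Two points in your version are genuine gaps. First, ``strictly inside the fresh tail (positions past $\max_i m_i^{(l)}$)'' is not the right threshold: position $j>\max_i m_i^{(l)}$ carries $x_{r_l+n_i^{(l)}+j-m_i^{(l)}}$, and this variable occurs \emph{nowhere else} only when its index exceeds $r_l+\overline{n}_l$, i.e.\ only when $j>\max_i\,(m_i^{(l)}+\overline{n}_l-n_i^{(l)})$, which is exactly the canonical grade of level $l$. For smaller $j$ the promoted variable is one that was substituted for a bound variable of a component, hence occurs inside the bodies and can coincide with a deeper principal variable $z^{(l')}$ (exactly as $x_1,x_2$ are principal variables in Example~\ref{ex:1}), destroying part (ii). Since with $f=\overline{n}_l$ there are \emph{no} positions past the canonical grade, your padding is not optional and your threshold must be the full grade, not $\max_i m_i^{(l)}$. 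Second, you never address the case $z_i^{(\sigma)}=z^{(l)}$ for some $l<\sigma$, which the paper explicitly flags after the proof of part of Lemma~\ref{lem:3}: there is only one substitution per variable, so assigning $\nabla_{h_i}$ to $z_i^{(\sigma)}$ and $\nabla_{h_3}$ to $z^{(l)}$ with $h_i\neq h_3$ is incoherent, and with your requirement $h_1\neq h_2$ at least one of $h_1,h_2$ must differ from $h_3$. This case needs a separate argument (e.g.\ that the ``wrong'' $h$ still lands in the padded, genuinely fresh part of the tail at every level where that variable is the head). Until these are supplied, part (ii) is not established.
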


\begin{ex}[Continuation of Example~\ref{ex:2}]
  % 86
  It is sufficient to choose $h_1 = 0$, $h_2 = 1$, $h_3 = 2$. By
  performing the substitutions we obtain
  \begin{align*}
    & M_1^{(0)} [\nabla_2y_1,\nabla_0y_2,\nabla_1y_3] = \lambda
      t_3\,(t_3\,y_1(\nabla_0y_2)\,\lambda
      t_3\,(t_3\,y_1(\nabla_0y_2)\,(\nabla_1y_3))) \\
    & M_2^{(0)} [\nabla_2y_1,\nabla_0y_2,\nabla_1y_3] = \lambda
      t_3\,(t_3\,y_1(\nabla_0y_2)\,\lambda
      t_3\,(t_3\,y_1(\nabla_1y_3)\,(\nabla_1y_3))) \\
    & \underline{M}_1^{(0)} = x_1\,y_1\,(\nabla_0y_2)\,\lambda
      t_3\,(t_3\,y_1\,(\nabla_0 y_2)\,(\nabla_1 y_3)) \\
    & \underline{M}_2^{(0)} = x_1\,y_1\,(\nabla_0y_2)\,\lambda
      t_3\,(t_3\,y_1\,(\nabla_1 y_3)\,(\nabla_1 y_3)) \\
    & \underline{M}_1^{(1)} = x_2\,y_1\,\lambda t_1\,(t_1\,
      y_2)\,(\nabla_1 y_3) \\
    & \underline{M}_2^{(1)} = x_2\,y_1\,\lambda t_1\,\lambda t_2\,(t_2\,
      y_3\,t_1)\,(\nabla_1 y_3) \\
    & \underline{M}_1^{(2)} = x_3\,y_2 \\
    & \underline{M}_2^{(2)} = x_4\,y_3\,x_3
  \end{align*}
\end{ex}
% 87
Having now all the principle variables of the chain distinct from each
other, we can apply Lemma~\ref{lem:2} to obtain
\begin{equation*}
  N_i^{(0)} [\nabla_2y_1,\nabla_0y_2,\nabla_1y_3]\,(\mathrm{K}^3\mathrm{I}\,
  x_1)\,(\mathrm{K}^2\mathrm{K} x_2)\,(\mathrm{K}^2\mathrm{K} x_3)\,
  (\mathrm{K}^4\mathrm{I}\,x_4)\,v_1v_2 = v_i
\end{equation*}
that is
\begin{equation*}
  N_i^{(0)} [\nabla_2y_1,\nabla_0y_2,\nabla_1y_3]\,
  (\mathrm{K}^2\mathrm{I})\,(\mathrm{K}\mathrm{K})\,
  (\mathrm{K}\mathrm{K})\,(\mathrm{K}^3\mathrm{I})\,v_1v_2 = v_i\enspace.
\end{equation*}

\begin{proof}[Proof (of Lemma~\ref{lem:3})]
  % 88
  If any of $M_i^{(l)}\,[z^{(0)},\ldots,z_2^{(\sigma)}]$ has the
  structure of (\ref{eq:2}), then for sufficiently large $h_1,h_2,h_3$,
  $M_i^{(l)}\,[\nabla_{h_3}z^{(0)},\ldots,\nabla_{h_2}z_2^{(\sigma)}]$
  has the following structure:
  \begin{equation}
    \label{eq:8}
    \lambda t_{g_i+1}\ldots \lambda
    t_{h_j+1}\,(t_{h_j+1}\,z\,Z_1^{(i)}\cdots Z_{g_i}^{(i)}\,
    t_{g_i+1}\ldots t_{h_j}),\qquad j = 1,2,3;\quad i = 1,2
  \end{equation}
  where $Z_k^{(i)}$ represent the result of substitutions
  $[z^{(0)} \mapsto \nabla_{h_3}z^{(0)}],\ldots,
  [z_2^{(\sigma)} \mapsto \nabla_{h_2}z_2^{(\sigma)}]$ in $Y_k^{(i)}$.

  % 89
  Since $M_1^{(l)} \sim M_2^{(l)}$ for $l < \sigma$, thus $g_1=g_2=g$
  in these formulas; meanwhile being $M_1^{(\sigma)} \not\sim
  M_2^{(\sigma)}$ it could be $g_1\ne g_2$.

  % 90
  Therefore the grade of each
  $M_i^{(l)}\,[z^{(0)},\ldots,z^{(\sigma)}]$ can be indicated by
  $g_0,g_1,\ldots g_{\sigma-1}$, $g_1^{(\sigma)},g_2^{(\sigma)}$. Let
  $\overline{g}$ be the maximal of these grades.

  % 91
  The choice $h_1 = h_2 = h_3 = \overline{g}$ guarantees that
  $\underline{z}^{(0)},\ldots, \underline{z}^{(\sigma-1)}$ in
  $\underline{M}_i^{(l)}$ are distinct from each other and are
  distinct from each
  $z^{(0)},\ldots z_1^{(\sigma)},z_2^{(\sigma)}$; but it does not
  guarantee $\underline{z}_1^{(\sigma)} \neq
  \underline{z}_2^{(\sigma)}$ in the case where
  $g_1^{(\sigma)} = g_2^{(\sigma)} = g^{(\sigma)}$ (the case of Example~\ref{ex:2}).

  % 92
  A simple inspection of (\ref{eq:8}) permits to assert
  \begin{equation*}
    h_1 - g^{(\sigma)} \ne h_2 - g^{(\sigma)} \rightarrow
    \underline{z}_1^{(\sigma)} \ne \underline{z}_2^{(\sigma)}
  \end{equation*}
  since in this case it is sufficient to choose
  \begin{equation*}
    h_1 = h_2 = \overline{g}\quad,\quad h_2 = \overline{g} + 1\enspace.
  \end{equation*}

  % 93
  Note that the solution of Example~\ref{ex:2} is not valid in general
  because it is possible that $z_1^{(\sigma)}$ or $z_2^{(\sigma)}$ is
  equal to some $z^{(l)}$ and $g_l = \overline{g}$.

  % 94
  We have thus shown that $\underline{z}_1^{(\sigma)} \ne
  \underline{z}_2^{(\sigma)}$ and that those $\underline{z}$ are all
  distinct from each other and are distinct from
  $z^{(0)},\ldots,z_1^{(\sigma)},z_2^{(\sigma)}$.
  In particular, it is also that $\underline{M}_1^{(\sigma)} \not\sim
  \underline{M}_2^{(\sigma)}$.
  % 95
  Examining~(\ref{eq:8}) one can see that $\underline{M}_1^{(l)} \sim
  \underline{M}_2^{(l)}$ for $l = 0,\ldots,\sigma-1$.

  % 96
  A reasoning similar to that used to demonstrate Lemma~\ref{lem:2}
  allows us to deduce $\underline{M}_1^{(\sigma-1)} \ne
  \underline{M}_2^{(\sigma-1)}$ from $\underline{M}_1^{(\sigma)} \ne
  \underline{M}_2^{(\sigma)}$ and therefore by induction on $\sigma$
  that $(\underline{M}_1^{(l)},\underline{M}_2^{(l)}$ is a chain
  associated with $(M_1^{(0)}\,[\nabla_{h_3}
  z^{(0)},\ldots,\nabla_{h_2} z_2^{(\sigma)}], M_2^{(0)}[\ldots])$.
  % 97
  Since the passage from $(N_1^{(0)},N_2^{(0)})$ to the pair just
  written occurs with operations allowed by Theorem~\ref{thm:1},
  Lemma~\ref{lem:3} is proved since it is reduced to Lemma~\ref{lem:2},
  and consequently Theorem 1 is also proved.
\end{proof}

% 98
The proof method for Theorem~\ref{thm:1} is constructive, therefore it
suggests an algorithm for constructing the formulas.

% 99
The suggested algorithm, however, is not the most efficient one.

% 100
We now give some additional information that we did not want to
include in the demonstration so as not to complicate it further.

% 101
While it does not seem possible to improve the results of
Lemma~\ref{lem:1} and~\ref{lem:2}, as far as Lemma~\ref{lem:3} is
concerned it is clear that a transformation $\nabla_h$ is
indispensable if there exists a class of equal variables $z^{(l)}$,
and only in the case that, as in Example~\ref{ex:2}, it happens that
there exist two or more incompatible selectors.
% 102
For example no transformation $\nabla_h$ is necessary for
\begin{align*}
  N_1 &= \lambda x\,(x\,(x\,(x\,y_1))) \\
  N_2 &= \lambda x\,(x\,(x\,y_2))
\end{align*}
in which
$N_i\,[\mathrm{K}\mathrm{K}y_1,\mathrm{K}^2\mathrm{I}\,y_2]\,\mathrm{I}\,v_1v_2
= v_i$, while in the case
\begin{align*}
  N_1 &= \lambda x\lambda y\,(x\,(x\,(x\,y))) = \underline{3} \\
  N_2 &= \lambda x\lambda y\,(x\,(x\,y)) = \underline{2}
\end{align*}
the transformation $\nabla_0$ is sufficient (while the theory
indicates $\nabla_1$):
\begin{equation*}
  N_i(\nabla_0 \mathrm{I})\,(\mathrm{K}^3\mathrm{I})\,(\mathrm{K} v_1) v_2 = v_i
\end{equation*}

% 103
It can be seen from these examples that Lemma~\ref{lem:3} is
susceptible to some improvement, with respect to the efficiency of the
associated algorithm.

\section{Further developments of Theorem~\ref{thm:1} and Corollary~\ref{cor:1}}

% 104
The method of proof of theorem 1 allows us to easily prove the following

\begin{corollary}
  \label{cor:2}
  % 105
  For each combinator $F\in\mathcal{N}$, if $y$ occurs in $F$ then
  there exist 2 combinators $\Delta$ and $\nabla$ such that
  \begin{equation*}
    \Delta \circ F \circ \nabla = \mathrm{I}
  \end{equation*}
  and that
  \begin{equation*}
    \Delta \equiv \langle G_1,\ldots, G_s \rangle,\qquad G_j \equiv
    \mathrm{K}^{a_j}\mathrm{K}^{b_j},\qquad \Delta\equiv \mathrm{K}^g
  \end{equation*}
  where $a_j, b_j, g, s \geqslant 0$ are integers.
\end{corollary}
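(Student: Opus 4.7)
The plan is to mirror the proof of Theorem~\ref{thm:1}, but to trace a chain inside the single formula $F$ leading from its outer structure down to an occurrence of $y$ at the principal-variable position, rather than between two distinct formulas. Write $F\equiv\lambda y.\,F'$ with $y$ free in $F'$, and expand $F'\equiv\lambda t_2\cdots\lambda t_n.(\xi X_1\cdots X_m)$ with $\xi\in\{y,t_2,\ldots,t_n\}$. The construction will proceed by induction on the number of symbols in $F$.

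In the base case $\xi\equiv y$ (analogous to Lemma~\ref{lem:1}), I will take $\nabla\equiv\mathrm{K}^m$ and $G_j\equiv\mathrm{K}^0\mathrm{K}^0$ for $j=1,\ldots,n-1$. A direct $\beta$-reduction then yields $F(\mathrm{K}^m y)\,G_1\cdots G_{n-1} = (\mathrm{K}^m y)\,\hat X_1\cdots \hat X_m = y$, since $\mathrm{K}^m y$ absorbs exactly its $m$ arguments and the $\hat X_i$ are discarded; the trailing outer $G_j$'s land on an expression that no longer depends on the $t_l$'s.

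In the inductive step $\xi\equiv t_k$ (analogous to Lemma~\ref{lem:2}), the variable $y$ must occur inside some $X_j$. I consider the strictly smaller combinator $F^{(1)}\equiv\lambda y\,\lambda t_2\cdots\lambda t_n.\,X_j$, still in $\mathcal{N}$; the induction hypothesis supplies $g$ and selectors $G'_1,\ldots,G'_{s'}$ of the shape $\mathrm{K}^{a}\mathrm{K}^{b}$ satisfying $F^{(1)}(\mathrm{K}^g y)\,G'_1\cdots G'_{s'}=y$. For $F$, I reuse the same $\nabla\equiv\mathrm{K}^g$, insert $G_{k-1}\equiv\mathrm{K}^{j-1}\mathrm{K}^{m-j}$ (which selects the $j$-th of the $m$ outer arguments), and inherit the remaining selector slots from the $G'_\ell$'s. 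The reduction of $F(\mathrm{K}^g y)$ applied to these selectors first absorbs the outer lambdas, yielding $G_{k-1}\,\hat X_1\cdots\hat X_m\,G_n\cdots G_s$; the selector $G_{k-1}$ then leaves $\hat X_j\,G_n\cdots G_s$, which matches the inductive reduction of $F^{(1)}$ and equals $y$.

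The main obstacle is precisely the collision highlighted by Example~\ref{ex:2}: if $t_k$ also occurs inside $X_j$, then $G_{k-1}$ is required simultaneously to be the outer selector $\mathrm{K}^{j-1}\mathrm{K}^{m-j}$ and to act as the substitute for $t_k$ inside $X_j$ prescribed by the induction, and these two demands may clash within the restricted shape $\mathrm{K}^{a}\mathrm{K}^{b}$. The remedy mirrors Lemma~\ref{lem:3}: pre-compose $F$ with a suitable $\nabla_h$-style renaming so that at every level of the chain all principal variables are distinct; after this preparation the induction runs without conflict, and a final accounting verifies that all witnesses retain the required forms $\nabla\equiv\mathrm{K}^g$ and $G_j\equiv\mathrm{K}^{a_j}\mathrm{K}^{b_j}$.
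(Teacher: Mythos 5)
Your argument rests on exactly the same mechanism as the paper's --- a descent through $\mathrm{K}^{a}\mathrm{K}^{b}$ selectors to a head occurrence of $y$, with $\nabla\equiv\mathrm{K}^{g}$ absorbing the $g$ arguments present at that occurrence and a Lemma~\ref{lem:3}-style $\nabla_h$ renaming to resolve selector collisions --- but it is packaged differently. The paper does not run a fresh induction on $F$: it applies Theorem~\ref{thm:1} to the pair $N_1\equiv Fy_1$, $N_2\equiv Fy_2$, observes that the associated chain is forced to terminate in $M_i^{(\sigma)}=y_i\,Y_1\cdots Y_g$ (the two formulas being identical except at the occurrences of $y$, the first inequivalent pair must have principal variables $y_1$ and $y_2$), notes that $y_i$ coincides with no earlier principal variable, reads off $F(\mathrm{K}^g y)\,G_1\cdots G_s=y$, and abstracts over $y$. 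That route buys brevity by reusing Lemmas~\ref{lem:1}--\ref{lem:3} wholesale; your direct induction buys a self-contained argument and makes the collision phenomenon explicit, at the cost of re-proving the content of Lemma~\ref{lem:2} in the special case where the two members of each pair coincide. One caution about your deferred ``final accounting'': it cannot come out as you state it. If a bound variable of $F$ is the principal variable at two levels of the chain with incompatible selection indices (the situation of Example~\ref{ex:2}), then after the $\nabla_h$ repair the argument fed into that position has the shape $\nabla_h(\cdots)$ and is \emph{not} a bare $\mathrm{K}^{a}\mathrm{K}^{b}$; for instance, for $F\equiv\lambda y\lambda t\,(t\,t\,(t\,y\,t))$ no pure selector substituted for $t$ can extract $y$, since reaching the inner spine requires selecting the second argument while reaching $y$ requires selecting the first. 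The paper's own proof glosses over the same point when it asserts that every $G_j$ is $\mathrm{I}$ or a selector, so you are faithfully reproducing its gap rather than introducing a new one, but either the class of admissible $G_j$ must be enlarged to include the $\nabla_h$-shaped combinators or an additional argument is needed here.
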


\begin{proof}
  % 106
  Let
  \begin{align*}
    N_1\,[y_1,y_2] & \equiv F\,y_1 \\
    N_2\,[y_1,y_2] & \equiv F\,y_2
  \end{align*}
It is obvious that $N_1 \ne N_2, N_1 \sim N_2$ and there exists a
chain $(M_1^{(l)},M_2^{(l)})$, $l = 0,\ldots,\sigma$ with
$M_1^{(\sigma)} \not\sim M_2^{(\sigma)}$.
% 108
It is almost obvious that $M_i^{(\sigma)}$ has the form
\begin{equation*}
  M_i^{(\sigma)}\,[y_1,y_2] = y_i\,Y_1\cdots Y_g
\end{equation*}
and that the $y_i$ do not coincide with any principle variables of
$M_i^{(0)},\ldots M_i^{(\sigma-1)}.$

% 109
Therefore we have
\begin{equation*}
  M_i^{(\sigma)}\,[\mathrm{K}^g y_1,\mathrm{K}^g y_2] = y_i
\end{equation*}
from which, by applying Theorem~\ref{thm:1} and Corollary~\ref{cor:1}
regarding the passage from $N_i$ to $M_i^{(\sigma)}$, we can write
\begin{equation*}
  N_i\,[\mathrm{K}^g y_1,\mathrm{K}^g y_2]\,G_1\cdots G_s = y_i
\end{equation*}
or return to the original writing
\begin{equation}
  \label{eq:9}
  F(\mathrm{K}^g y)\,G_1\cdots G_s = y
\end{equation}

% 110
Now from Corollary~\ref{cor:1} we know that $G_j$ are combinators,
from the proof of Theorem~\ref{thm:1} it turns out that $G_j =
\mathrm{I} = \mathrm{K}^0 \mathrm{K}^0$ or it is a selector of the
form $\mathrm{K}^a\mathrm{K}^b$. Equation~(\ref{eq:9}) can be
rewritten to
\begin{equation*}
  \langle G_1,\ldots G_s\rangle\,(F(\mathrm{K}^g y)) = y
\end{equation*}
from which, abstracting with respect to $y$ and introducing the
composition operator $\circ$, we obtain
\begin{equation*}
  \langle G_1,\ldots G_s\rangle \circ F \circ \mathrm{K}^g = \mathrm{I}
\end{equation*}
which proves the corollary.
\end{proof}

% 112
Theorem~\ref{thm:1} and its corollaries can give some information
about the algebraic structure of combinators regardless of whether or
not they possess normal form.

% 113
In fact, the proof of Theorem~\ref{thm:1} essentially depends on the
existence of the chain $(M_1^{(l)},M_2^{(l)})$ for normal formulas
distinct from each other.
% 114
For the existence of the chain it is sufficient but not necessary that
the starting formulas are normal.
% 115
To see this, it is enough to think about the fact that each formula
canceled by the process might not have a normal form without affecting
the final result.
% 116
We now show another result on combinators not necessarily having
normal form.

\begin{corollary}
  \label{cor:3}
  % 117
  Let $X,c_1,c_2$ be combinators.

  % 118
  If $X c_1 \ne X c_2$ and these formulas both admit normal form,
  there exists at least a third combinator $c$ such that
  \begin{equation*}
    X c \ne X c_1\quad,\quad X c \ne X c_2
  \end{equation*}
\end{corollary}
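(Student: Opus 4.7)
The plan is to combine Corollary~\ref{cor:1} with a fixed-point diagonalization. Since $Xc_1 \neq Xc_2$ and both admit normal form, by Church--Rosser their normal forms are distinct elements of $\mathcal{N}$, so Corollary~\ref{cor:1} applies to the pair $(Xc_1, Xc_2)$. I take the arbitrary combinators in Corollary~\ref{cor:1} to be $X_1 \equiv \mathrm{K}$ and $X_2 \equiv \mathrm{K}\mathrm{I}$, obtaining an integer $s > 0$ and combinators $G_1,\ldots,G_s$ such that
\begin{equation*}
  (Xc_1)\,G_1\cdots G_s = \mathrm{K}, \qquad (Xc_2)\,G_1\cdots G_s = \mathrm{K}\mathrm{I}.
\end{equation*}

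Next I will form the \emph{swap combinator}
\begin{equation*}
  D \equiv \lambda y.\, (Xy)\,G_1\cdots G_s\,c_2\,c_1,
\end{equation*}
which satisfies $Dc_1 = \mathrm{K}\,c_2\,c_1 = c_2$ and $Dc_2 = \mathrm{K}\mathrm{I}\,c_2\,c_1 = c_1$, so $D$ interchanges $c_1$ and $c_2$. I then let $c$ be any combinator satisfying $c = Dc$, i.e.~a fixed point of $D$, which exists by the usual fixed-point construction in $\lambda$-k-calculus. I propose this $c$ as the desired witness.

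The verification is by contradiction. If $Xc = Xc_1$, then by the congruence of $\lambda$-k-convertibility $(Xc)\,G_1\cdots G_s = \mathrm{K}$, whence $Dc = \mathrm{K}\,c_2\,c_1 = c_2$, and therefore $c = Dc = c_2$; applying $X$ to both sides gives $Xc = Xc_2$, contradicting $Xc_1 \neq Xc_2$. The case $Xc = Xc_2$ is symmetric: one deduces $Dc = c_1$, then $c = c_1$, then $Xc = Xc_1$, and again $Xc_1 = Xc_2$. Hence $Xc \neq Xc_1$ and $Xc \neq Xc_2$.

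I do not anticipate a substantive obstacle. The only delicate point, worth emphasizing, is that Corollary~\ref{cor:1} is invoked on the normal forms of $Xc_1$ and $Xc_2$ (available by hypothesis) rather than on $c_1, c_2$ themselves; moreover, the witness $c$ need not possess a normal form, which is perfectly consistent with the author's remark that the chain construction depends only on the normality of the starting pair and not on the intermediate combinators manipulated by the argument.
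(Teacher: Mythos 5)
Your proof is correct and takes essentially the same route as the paper: B\"ohm likewise invokes Corollary~\ref{cor:1} to produce a combinator $\Delta$ swapping $Xc_1$ and $Xc_2$ (instantiating the arbitrary combinators directly as $c_2,c_1$ rather than as $\mathrm{K},\mathrm{K}\mathrm{I}$ followed by the extra arguments $c_2\,c_1$), and then diagonalizes with a fixed-point combinator, setting $b=\theta\,(X\circ\Delta)$ and $c=\Delta b$ where you instead take $c$ as a fixed point of $\Delta\circ X$ --- the same construction up to which side of the composition the fixed point is taken on. The concluding contradiction argument is identical.
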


\begin{proof}
  % 119
  From Corollary~\ref{cor:1} it follows that there exists $\Delta$
  such that
  \begin{align*}
    \Delta\,(X c_1) &= c_2 \\
    \Delta\,(X c_2) &= c_1
  \end{align*}

  % 120
  Let $b = \theta\,(X\circ \Delta)$ where for $\theta$ the property
  $\theta x = x\,(\theta x)$ holds.

  % 121
  We have therefore
  \begin{equation*}
    b = X\,(\Delta\,(\theta\,(X\circ\Delta))) = X\,(\Delta b)\enspace.
  \end{equation*}

  Now if it were $b = X c_1$ one would have
  \begin{equation*}
    X c_1 = X\,(\Delta\,(X c_1)) = X c_2
  \end{equation*}
  in contradiction to the hypothesis; And if it were $b = X c_2$ one
  would have
  \begin{equation*}
    X c_2 = X\,(\Delta\,(X c_2)) = X c_1
  \end{equation*}
  in contradiction; therefore $b \ne X c_1, b \ne X c_2$.
  % 123
  By choosing $c = \Delta b$ now the thesis follows.
\end{proof}

\bibliographystyle{amsalpha}
\bibliography{Boehm}

\end{document}